\newtheorem{thm}{Theorem}[section]
\newtheorem{lemma}[thm]{Lemma}
\newtheorem{cor}[thm]{Corollary}
\newtheorem{pro}[thm]{Proposition}
\newtheorem{de}{Definition}[section]
\newenvironment{proof} {\par \noindent \textbf{Proof: }}{\QED \par \bigskip \par}
\newcommand{\QED}{\hfill$\square$}
\newcommand{\rz}{\vspace{0.2cm}}
\begin{document}
\baselineskip 16pt

\phantom{.} \vskip 6cm

\begin{center}
{\Large \bf ON THE WIENER INDEX AND LAPLACIAN COEFFICIENTS \\[8pt]
            OF GRAPHS WITH GIVEN DIAMETER OR RADIUS \footnote{%
          This work was supported by
          the research program P1-0285 of the Slovenian Agency for Research and
          the research grant 144015G of the Serbian Ministry of Science.
}
}

\bigskip
\bigskip
{\large \sc Aleksandar Ili\'c}

\smallskip
{\em Faculty of Sciences and Mathematics, University of Ni\v s, Serbia} \\
e-mail: {\tt aleksandari@gmail.com}

\bigskip
{\large \sc Andreja Ili\' c}

\smallskip
{\em Faculty of Sciences and Mathematics, University of Ni\v s, Serbia} \\
e-mail: {\tt ilic\_andrejko@yahoo.com}

\bigskip
{\large \sc Dragan Stevanovi\'c}

\medskip
{\em University of Primorska---FAMNIT, Glagolja\v ska 8, 6000 Koper, Slovenia, and \\
     Mathematical Institute, Serbian Academy of Science and Arts, \\
     Knez Mihajlova 36, 11000 Belgrade, Serbia}\\
e-mail: {\tt dragance106@yahoo.com}

\bigskip\medskip
{\small (Received October 13, 2008)}

\end{center}


\begin{abstract}
Let $G$ be a simple undirected $n$-vertex graph with
the characteristic polynomial of its Laplacian matrix~$L(G)$,
$\det (\lambda I - L (G))=\sum_{k = 0}^n (-1)^k c_k \lambda^{n - k}$.
It is well known that for trees
the Laplacian coefficient~$c_{n-2}$ is equal to the Wiener index of~$G$.
Using a result of Zhou and Gutman on the relation between
the Laplacian coefficients and the matching numbers in
subdivided bipartite graphs,
we characterize first the trees with given diameter and
then the connected graphs with given radius
which simultaneously minimize all Laplacian coefficients.
This approach generalizes recent results of Liu and Pan
[MATCH Commun. Math. Comput. Chem. 60 (2008), 85--94]
and Wang and Guo [MATCH Commun. Math. Comput. Chem. 60 (2008), 609--622]
who characterized $n$-vertex trees with fixed diameter $d$ which minimize the Wiener index.
In conclusion,
we illustrate on examples with Wiener and modified hyper-Wiener index that
the opposite problem of simultaneously maximizing all Laplacian coefficients has no solution.
\end{abstract}

\section{Introduction}

Let $G = (V, E)$ be a simple undirected graph with $n = |V|$
vertices. The Laplacian polynomial $P (G,\lambda)$ of $G$ is the
characteristic polynomial of its Laplacian matrix $L (G) = D (G) -
A(G)$,
$$
P (G, \lambda) = \det (\lambda I_n - L (G)) = \sum_{k = 0}^n (-1)^k
c_k \lambda^{n - k}.
$$

The Laplacian matrix $L(G)$ has non-negative eigenvalues $\mu_1
\geqslant \mu_2 \geqslant \ldots \geqslant \mu_{n - 1} \geqslant
\mu_n = 0$~\cite{CvDS95}. From Viette's formulas, $c_k = \sigma_k (\mu_1, \mu_2,
\ldots, \mu_{n - 1})$ is a symmetric polynomial of order $n - 1$. In
particular, $c_0 = 1$, $c_1 = 2 n$, $c_n = 0$ and $c_{n - 1} = n
\tau (G)$, where $\tau (G)$ denotes the number of spanning trees of~$G$.
If $G$ is a tree, coefficient $c_{n - 2}$ is equal to its Wiener index,
which is a sum of distances between all pairs of vertices.\rz

Let $m_k (G)$ be the number of matchings of $G$ containing exactly
$k$ independent edges. The subdivision graph $S (G)$ of $G$ is
obtained by inserting a new vertex of degree two on each edge of~$G$.
Zhou and Gutman \cite{ZhGu08} proved that for every acyclic graph~$T$
with $n$ vertices
\begin{equation}
\label{eq-zhou} c_k (T) = m_k (S (T)), \quad 0 \leqslant k \leqslant
n.
\end{equation}

Let $C(a_1, \ldots, a_{d-1})$ be a caterpillar obtained
from a path~$P_d$ with vertices $\{v_{0},v_{1},\dots,v_d\}$
by attaching $a_i$ pendent edges to vertex~$v_{i}$, $i=1,\dots,d-1$.
Clearly, $C(a_1, \ldots, a_{d-1})$ has diameter~$d$ and
$n = d+1+\sum_{i = 1}^{d-1} a_i$.
For simplicity,
$C_{n, d} = C (0, \ldots, 0, a_{\lfloor d / 2 \rfloor}, 0, \ldots, 0)$.

\begin{figure}[ht]
  \center
  \includegraphics [width = 12cm]{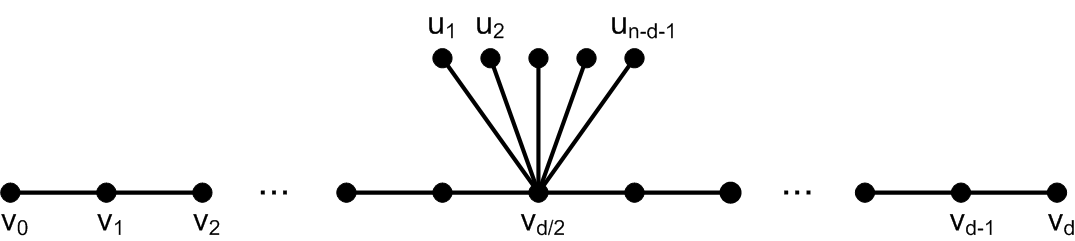}
  \caption {\textit{Caterpillar $C_{n, d}$.}}
\end{figure}

In \cite{SiMaBe08} it is shown that caterpillar $C_{n, d}$ has
minimal spectral radius (the greatest eigenvalue of adjacency
matrix) among graphs with fixed diameter. \rz

Our goal here is to characterize the trees with given diameter and
the connected graphs with given radius which simultaneously minimize
all Laplacian coefficients. We generalize recent results of Liu and Pan \cite{LiPa08}, and Wang and Guo \cite{WaGu08}
who proved that the caterpillar $C_{n, d}$ is the unique tree with $n$ vertices and diameter $d$,
that minimizes Wiener index. We also deal with connected $n$-vertex graphs with fixed diameter,
and prove that $C_{n, 2r - 1}$ is extremal graph. \rz

After a few preliminary results in Section~2, we prove in Section~3
that a caterpillar~$C_{n, d}$ minimizes all Laplacian coefficients
among $n$-vertex trees with diameter~$d$. In particular, $C_{n,d}$
minimizes the Wiener index and the modified hyper-Wiener index among
such trees. Further, in Section~4 we prove that~$C_{n, 2r-1}$
minimizes all Laplacian coefficients among connected $n$-vertex
graphs with radius~$r$. Finally, in conclusion we illustrate on
examples with Wiener and modified hyper-Wiener index that the
opposite problem of simultaneously maximizing all Laplacian
coefficients has no solution.

\section{Preliminaries}

The distance $d (u, v)$ between two vertices $u$ and $v$ in a
connected graph $G$ is the length of a shortest path between them.
The eccentricity $\varepsilon (v)$ of a vertex $v$ is the maximum
distance from $v$ to any other vertex.

\begin{de}
The diameter $d (G)$ of a graph G is the maximum eccentricity over
all vertices in a graph, and the radius $r (G)$ is the minimum
eccentricity over all $v \in V (G)$.
\end{de}

Vertices of minimum eccentricity form the center (see
\cite{DoEn01}). A tree $T$ has exactly one or two adjacent center
vertices. For a tree $T$,
\begin{equation}
\label{tree_center} d (T) = \left\{
\begin{array}{l l}
  2 r(T) - 1 & \quad \mbox{if $T$ is bicentral, }\\
  2 r (T) & \quad \mbox{if $T$ has unique center vertex. }\\
\end{array} \right.
\end{equation}

The next lemma counts the number of matchings in a path~$P_n$.

\begin{lemma}
For $0 \leqslant k \leqslant \lceil \frac{n}{2} \rceil$, the number
of matchings with $k$ edges for path $P_n$ is
$$
m_k (P_n) = \binom{n - k}{k}.
$$
\end{lemma}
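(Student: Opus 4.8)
The plan is to prove the formula $m_k(P_n) = \binom{n-k}{k}$ by a standard combinatorial recurrence argument on the path $P_n$ with vertices labelled $1, 2, \ldots, n$ and edges $e_i = \{i, i+1\}$ for $1 \leqslant i \leqslant n-1$. First I would set up the base cases: $m_0(P_n) = 1 = \binom{n}{0}$ for every $n \geqslant 1$ (the empty matching), and for small $n$ one checks directly that $P_1$ and $P_2$ satisfy the claim. It is also convenient to adopt the convention that $m_k(P_0) = m_k(P_1) = 0$ for $k \geqslant 1$, consistent with $\binom{0}{k} = \binom{1-k}{k} = 0$ in that range.

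The key step is the deletion recurrence. Fix $k \geqslant 1$ and consider the last edge $e_{n-1} = \{n-1, n\}$. A $k$-matching of $P_n$ either does not use $e_{n-1}$, in which case it is a $k$-matching of $P_{n-1}$, or it uses $e_{n-1}$, in which case the remaining $k-1$ edges form a matching of the path on vertices $1, \ldots, n-2$, i.e. of $P_{n-2}$. Hence
\begin{equation}
\label{eq-matching-rec}
m_k(P_n) = m_k(P_{n-1}) + m_{k-1}(P_{n-2}).
\end{equation}
Then I would verify that $\binom{n-k}{k}$ satisfies the same recurrence: by Pascal's rule,
\begin{equation}
\label{eq-pascal}
\binom{n-k}{k} = \binom{(n-1)-k}{k} + \binom{(n-1)-k}{k-1} = \binom{(n-1)-k}{k} + \binom{(n-2)-(k-1)}{k-1},
\end{equation}
which is exactly \eqref{eq-matching-rec} with $m$ replaced by the binomial coefficient. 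Since both sides of the claimed identity satisfy the same two-term recurrence and agree on the base cases, induction on $n$ (for all $k$ simultaneously) finishes the proof.

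An alternative, which I might present instead or in addition for transparency, is a direct bijective count: a $k$-matching of $P_n$ is a choice of $k$ pairwise non-adjacent edges among $e_1, \ldots, e_{n-1}$, i.e. a choice of $k$ indices from $\{1, \ldots, n-1\}$ no two of which are consecutive. The number of ways to choose $k$ non-consecutive elements from an $(n-1)$-element linear arrangement is the well-known quantity $\binom{(n-1)-(k-1)}{k} = \binom{n-k}{k}$, obtained by the standard "gap" substitution that subtracts $j-1$ from the $j$-th chosen index. This immediately gives the range constraint $0 \leqslant k \leqslant \lfloor n/2 \rfloor$ as well (the statement's $\lceil n/2 \rceil$ upper bound is harmless since the extra term vanishes).

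There is no real obstacle here; the only mild care needed is bookkeeping of the edge cases when $n \in \{0, 1, 2\}$ so that the recurrence \eqref{eq-matching-rec} is valid for all $n \geqslant 2$, and noting that the binomial coefficient is interpreted as $0$ once $k$ exceeds $n-k$. I expect the write-up to be short, with the induction via \eqref{eq-matching-rec} and \eqref{eq-pascal} being the cleanest route.
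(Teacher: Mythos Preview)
Your approach is essentially identical to the paper's: both derive the recurrence $m_k(P_n)=m_k(P_{n-1})+m_{k-1}(P_{n-2})$ by casing on whether the matching uses the terminal edge, then observe that $\binom{n-k}{k}$ satisfies the same recurrence (Pascal) with matching initial values. Your extra bijective count and your remark that the maximum matching size is actually $\lfloor n/2\rfloor$ (so the $k=\lceil n/2\rceil$ term in the statement is harmlessly zero when $n$ is odd) are welcome additions not in the paper.
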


\begin{proof}
If $v$ is a pendent vertex of a graph $G$, adjacent to $u$, then for
the matching number of $G$ the recurrence relation holds
$$
m_k (G) = m_k (G - v) + m_{k - 1} (G - u - v).
$$

If $G$ is a path, then $m_k (P_n) = m_k (P_{n - 1}) + m_{k - 1}
(P_{n - 2})$. For base cases $k = 0$ and $k = 1$ we have $m_0 (P_n)
= 1$ and $m_1 (P_n) = n - 1$. After substituting formula for $m_k
(P_n)$, we get the well-known identity for binomial coefficients.
$$
\binom{n - k}{k} = \binom{n - 1 - k}{k} + \binom{n - 2 - (k - 1)}{k
- 1}.
$$

Maximum cardinality of a matching in the path $P_n$ is $\lceil
\frac{n}{2} \rceil$ and thus, $0 \leqslant k \leqslant \lceil
\frac{n}{2} \rceil$.
\end{proof}

The union $G = G_1 \cup G_2$ of graphs $G_1$ and $G_2$ with disjoint
vertex sets $V_1$ and $V_2$ and edge sets $E_1$ and $E_2$ is the
graph $G = (V, E)$ with $V = V_1 \cup V_2$ and $E = E_1 \cup E_2$.
If $G$ is a union of two paths of lengths $a$ and $b$, then $G$ is
disconnected and has $a + b$ vertices and $a + b - 2$ edges.

\begin{lemma}
\label{le-two paths} Let $m_k (a, b)$ be the number of $k$-matchings
in $G = P_a \cup P_b$, where $a + b$ is fixed even number. Then, the
following inequality holds
$$
m_k \left ( \left \lceil \frac{a + b}{2} \right \rceil, \left
\lfloor \frac{a + b}{2} \right \rfloor \right ) \leqslant \ldots
\leqslant m_k (a + b - 2, 2) \leqslant m_k (a + b, 0) = m_k (P_{a +
b}).
$$
\end{lemma}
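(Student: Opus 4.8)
The plan is to work with the matching generating polynomial $\mu(G,x)=\sum_k m_k(G)\,x^k$, which is multiplicative over disjoint unions, so that $\mu(P_a\cup P_b,x)=\mu(P_a,x)\,\mu(P_b,x)$. By the previous lemma, $\mu(P_n,x)=p_n(x):=\sum_k\binom{n-k}{k}x^k$, and rewriting the pendant-vertex recurrence $m_k(P_n)=m_k(P_{n-1})+m_{k-1}(P_{n-2})$ in generating-function form gives $p_n=p_{n-1}+x\,p_{n-2}$ with $p_0=p_1=1$. Hence $m_k(a,b)$ is the coefficient of $x^k$ in $p_a(x)\,p_b(x)$, and every inequality in the asserted chain has the form $m_k(a,b)\le m_k(a+2,b-2)$ with $a\ge b\ge 2$, except for one comparison $m_k(\tfrac s2,\tfrac s2)\le m_k(\tfrac s2+1,\tfrac s2-1)$ at the balanced end when $s=a+b\equiv 2\pmod 4$. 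So it is enough to show that the polynomials $p_{a+2}p_{b-2}-p_ap_b$ and $p_{n+1}p_{n-1}-p_n^2$ have nonnegative coefficients under the parity hypotheses that actually occur along the chain.

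First I would reduce the $(+2,-2)$ step to a $(+1,-1)$ step: applying the pendant-vertex recurrence to an end of $P_{a+2}$ and to an end of $P_b$ yields $m_k(a+2,b-2)-m_k(a,b)=m_k(a+1,b-2)-m_k(a,b-1)$, that is, $p_{a+2}p_{b-2}-p_ap_b=p_{a+1}p_{b-2}-p_ap_{b-1}$. Next I would establish the Cassini/Catalan-type identity $p_c p_d-p_{c+1}p_{d-1}=(-x)^d\,p_{c-d}$ for $c\ge d\ge 0$ by induction on $d$; the cases $d=0,1$ are immediate from $p_{-1}=0$ and from $p_{c+1}-p_c=x\,p_{c-1}$, and the inductive step follows because, as functions of $d$, both sides obey the same recurrence $f_d=f_{d-1}+x\,f_{d-2}$. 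Taking $c=a,\ d=b-1$ then gives $p_{a+1}p_{b-2}-p_ap_{b-1}=(-1)^b\,x^{b-1}\,p_{a-b+1}$, and taking $c=d=n$ gives $p_{n+1}p_{n-1}-p_n^2=(-1)^{n+1}x^n$. (Equivalently, both identities drop out of the Binet form $p_n=(\alpha^{n+1}-\beta^{n+1})/(\alpha-\beta)$, where $\alpha,\beta$ are the roots of $t^2=t+x$, so $\alpha+\beta=1$ and $\alpha\beta=-x$.)

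To conclude, I would note that along the chain the smaller coordinate stays even: when $s\equiv 0\pmod 4$ all terms are $(\tfrac s2+2i,\tfrac s2-2i)$ with $\tfrac s2$ even, and when $s\equiv 2\pmod 4$ the chain is $(\tfrac s2,\tfrac s2)$ followed by $(\tfrac s2+1,\tfrac s2-1),(\tfrac s2+3,\tfrac s2-3),\dots$ whose second coordinates are all even. Thus $(-1)^b=1$, so $p_{a+2}p_{b-2}-p_ap_b=x^{b-1}p_{a-b+1}$ has nonnegative coefficients, and for the single bridging step $n=\tfrac s2$ is odd, so $p_{n+1}p_{n-1}-p_n^2=x^n$ also does; comparing coefficients of $x^k$ gives each inequality, with the last pair $m_k(s-2,2)\le m_k(s,0)$ being the case $b=2$. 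I expect the parity bookkeeping to be the only real obstacle: the identity carries the wrong sign for odd $b$, and monotonicity genuinely fails across splits with an odd smaller part (for instance $m_2(P_5\cup P_3)=11>10=m_2(P_7\cup P_1)$), so the ``$\dots$'' in the statement must be read as the ``even skeleton'' of splits, with the balanced split adjoined at the front.
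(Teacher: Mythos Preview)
Your argument is correct and takes a genuinely different route from the paper's. The paper argues combinatorially: after disposing of the extreme split $(a+b,0)$ by a subgraph containment, it compares consecutive pairs $(a,b)$ and $(a-2,b+2)$ by partitioning the $k$-matchings of each according to whether two designated terminal edges are both present, builds an explicit bijection between the ``not both'' classes through a common path $P_{a+b-1}$, and handles the remaining classes by induction on $a+b$. You instead compute the difference exactly via a Cassini--Catalan identity for the path matching polynomials, obtaining
\[
p_{a+2}p_{b-2}-p_ap_b=(-1)^b\,x^{b-1}p_{a-b+1}
\quad\text{and}\quad
p_{n+1}p_{n-1}-p_n^2=(-1)^{n+1}x^{n}.
\]
This is sharper than the paper's inequality (it gives the precise coefficient-wise discrepancy) and it makes transparent the parity obstruction you correctly isolate: the step-by-two comparison reverses when the smaller part is odd, exactly as your example $m_2(P_5\cup P_3)=11>10=m_2(P_7\cup P_1)$ shows. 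Your reading of the ``$\ldots$'' as the even-second-coordinate skeleton, with the balanced split adjoined via the $(+1,-1)$ step when $s\equiv 2\pmod 4$, is the only interpretation under which the displayed chain is actually true; the paper is silent on this subtlety (its inductive step implicitly needs $b\geqslant 2$), but its later uses of the lemma in Lemma~\ref{le-radius} and Theorem~\ref{thm-diameter} invoke only even--even splits, which both approaches cover.
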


\begin{proof}
Without loss of generality, we can assume that $a \geqslant b$.
Notice that the number of vertices in every graph is equal to $a +
b$. The path $P_{a + b}$ contains as a subgraph $P_{a'} \cup
P_{b'}$, where $a' + b' = a + b$ and $a' \geqslant b' > 0$. This
means that the number of $k$-matchings of $P_{a + b}$ is greater
than or equal to the number of $k$ matchings of $P_{a'} \cup
P_{b'}$, and therefore $m_k (a + b, 0) \geqslant m_k (a', b')$. In
the sequel, we exclude $P_{a + b}$ from consideration.\rz

For the case $k = 0$, by definition we have identity $m_0 (G) = 1$.
For $k = 1$ we have equality, because
$$
m_1 (a', b') = (a' - 1) + (b' - 1) = a' + b' - 2 = a + b - 2.
$$

We will use mathematical induction on the sum $a + b$. The base
cases $a + b = 2, 4, 6$ are trivial for consideration using previous
lemma. Suppose now that $a + b$ is an even number greater than $6$
and consider graphs $G = P_a \cup P_b$ and $G' = P_{a'} \cup
P_{b'}$, such that $a > b$ and $a' = a - 2$ and $b' = b + 2$. We
divide the set of $k$-matchings of $G'$ in two disjoint subsets
$\mathcal{M}_1'$ and $\mathcal{M}_2'$. The set $\mathcal{M}_1'$
contains all $k$-matchings for which the last edge of $P_{a'}$ and
the first edge of $P_{b'}$ are not together in the matching, while
$\mathcal{M}_2'$ consists of $k$-matchings that contain both the
last edge of $P_{a'}$ and the first edge of $P_{b'}$. Analogously
for the graph $G$, we define the partition $\mathcal{M}_1 \cup
\mathcal{M}_2$ of the set of $k$-matchings. \rz

Consider an arbitrary matching $M'$ from $\mathcal{M}_1'$ with $k$
disjoint edges. We can construct corresponding matching $M$ in the
graph $G$ in the following way: join paths $P_{a'}$ and $P_{b'}$ and
form a path $P_{a + b - 1}$ by identifying the last vertex on path
$P_{a'}$ and the first vertex on path $P_{b'}$. This way we get a
$k$-matching in path $P_{a + b - 1} = v_1 v_2 \ldots v_{a + b - 1}$.
Next, split graph $P_{a + b - 1}$ in two parts to get $P_{a} = v_1
v_2 \ldots v_a$ and $P_{b} = v_a v_{a + 1} \ldots v_{a + b - 1}$.
Note that the last edge in $P_{a}$ and the first edge in $P_{b}$ are
not both in the matching $M$. This way we establish a bijection
between sets $\mathcal{M}_1$ and $\mathcal{M}_1'$. \rz

Now consider a matching $M'$ of $G'$ such that the last edge of
$P_{a'}$ and the first edge of $P_{b'}$ are in $M'$. The cardinality
of the set $\mathcal{M}'_2$ equals to $m_{k - 1} (a' - 2, b' - 2)$,
because we cannot include the first two vertices from $P_{a'}$ and
the last two vertices from $P_{b'}$ in the matching. Analogously, we
conclude that $|\mathcal{M}_2| = m_{k - 1} (a - 2, b - 2)$. This way
we reduce problem to pairs $(a' - 2, b' - 2)$ and $(a - 2, b - 2)$
with smaller sum and inequality
$$
m_{k - 1} (a - 2, b - 2) \geqslant m_{k - 1} (a' - 2, b' - 2)
$$
holds by induction hypothesis. If one of numbers in the set $\{a, b,
a', b'\}$ becomes equal to $0$ using above transformation, it must
the smallest number $b$. In that case, we have $m_{k - 1} (a - 2, 0)
\geqslant m_{k - 1} (a' - 2, b' - 2)$ which is already considered.
\end{proof}

\begin{lemma}
\label{le-radius} For every $2 \leqslant r \leqslant \lfloor
\frac{n}{2} \rfloor$, it holds
$$
c_k (C_{n, 2r}) \geqslant c_k (C_{n, 2r - 1}).
$$
\end{lemma}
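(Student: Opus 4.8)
The plan is to pass from the Laplacian coefficients to matchings in subdivision graphs via the Zhou--Gutman identity \eqref{eq-zhou}, so that the claim becomes $m_k(S(C_{n,2r})) \ge m_k(S(C_{n,2r-1}))$, and then to realise \emph{both} subdivision graphs as produced from one common tree $F$ by attaching a pendent path on two vertices at two different places. Write $p = n - 2r - 1$ (so $p \ge 0$ precisely when $C_{n,2r}$ exists, i.e.\ $n \ge 2r+1$, which is the range in which the statement has content). A direct inspection of the subdivision shows that $S(C_{n,2r})$ is the tree obtained from a vertex $c$ by attaching two pendent paths of length $2r$ and $p$ pendent paths of length $2$, while $S(C_{n,2r-1})$ is obtained from a vertex $c$ by attaching pendent paths of lengths $2r$ and $2r-2$ and $p+1$ pendent paths of length $2$. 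Let $F$ be the tree obtained from $c$ by attaching one pendent path of length $2r$, one pendent path of length $2r-2$ whose end-vertex is $t$, and $p$ pendent paths of length $2$. Attaching a pendent path $u$--$w_1$--$w_2$ at $u=t$ lengthens the $(2r-2)$-arm to a $2r$-arm and reproduces $S(C_{n,2r})$; attaching it at $u=c$ creates one more pendent path of length $2$ and reproduces $S(C_{n,2r-1})$.

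Next I would record how such an attachment changes the matching polynomial. Using the pendent-vertex recurrence $m_k(G)=m_k(G-v)+m_{k-1}(G-u-v)$ (with $v$ a leaf adjacent to $u$) first at $w_2$ and then at $w_1$ gives, for any graph $F$ and any vertex $u$,
\[
m_k(F + P_2 \text{ at } u) = m_k(F) + m_{k-1}(F) + m_{k-1}(F-u).
\]
Applying this with $u=t$ and with $u=c$ and subtracting, the common terms $m_k(F)+m_{k-1}(F)$ cancel, so
\[
m_k(S(C_{n,2r})) - m_k(S(C_{n,2r-1})) = m_{k-1}(F-t) - m_{k-1}(F-c),
\]
and it remains to prove $m_j(F-t) \ge m_j(F-c)$ for every $j \ge 0$.

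This last inequality follows from a subgraph comparison. Deleting the leaf $t$ shortens the $(2r-2)$-arm of $F$ to length $2r-3$, so $F-t$ is the tree built from $c$ by attaching pendent paths of lengths $2r$, $2r-3$ and $2$ ($p$ copies); deleting the central vertex $c$ disconnects $F$ into $P_{2r} \cup P_{2r-2} \cup p\,P_2$. Both of these graphs have $4r-2+2p$ vertices, and $P_{2r}\cup P_{2r-2}\cup p\,P_2$ is a spanning subgraph of $F-t$: map $P_{2r}$ onto the long arm, map $P_{2r-2}$ onto the path consisting of $c$ together with the $(2r-3)$-arm, and map the $p$ copies of $P_2$ onto the $p$ short arms. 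Since the number of $j$-matchings is monotone under taking subgraphs (a $j$-matching of a subgraph is a $j$-matching of the host), $m_j(F-t)\ge m_j(F-c)$ for all $j$, and the lemma follows. Here one only needs to check that $t$ is genuinely a leaf of $F$ and that the displayed embedding is onto, both of which hold because $r\ge 2$ and $p\ge 0$.

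The one genuine difficulty is organisational rather than analytic: there is no single degree-preserving move carrying $C_{n,2r}$ to $C_{n,2r-1}$, so the crux is to spot the right common parent $F$ and the two attachment vertices; after that, the inequality collapses to the one-line containment $P_{2r}\cup P_{2r-2}\cup p\,P_2 \subseteq F-t$. When $p=0$ this containment is also immediate from Lemma~\ref{le-two paths}, but the embedding above disposes of all values of $p$ at once.
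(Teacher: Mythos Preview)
Your proof is correct and follows essentially the same route as the paper: both arguments pass to matchings in the subdivision via Zhou--Gutman, both realise $S(C_{n,2r})$ as $S(C_{n,2r-1})$ with one pendent $P_2$ relocated from the centre to the end of the short arm, and both reduce the comparison to the containment of $P_{2r}\cup P_{2r-2}\cup p\,P_2$ in the corresponding connected residual graph. Your packaging via the common parent $F$ and the exact identity $m_k(F+P_2\text{ at }u)=m_k(F)+m_{k-1}(F)+m_{k-1}(F-u)$ is somewhat cleaner than the paper's injection bookkeeping, but the underlying comparison $m_{k-1}(F-t)\ge m_{k-1}(F-c)$ is (up to an irrelevant isolated vertex) exactly the inequality $m_{k-1}(G)\ge m_{k-1}(G')$ the paper proves.
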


\begin{proof}
Coefficients $c_0$ and $c_n$ are constant, while trees $C_{n, 2r}$
and $C_{n, 2r - 1}$ have equal number of vertices and thus, we have
equalities
$$
c_1 (C_{n, 2r}) = c_1 (C_{n, 2r - 1}) = 2n \quad \mbox{ and } \quad
c_{n - 1} (C_{n, 2r}) = c_{n - 1} (C_{n, 2r - 1}) = n.
$$

Assume that $2 \leqslant k \leqslant n - 2$. Using identity
(\ref{eq-zhou}), we will establish injection from the set of
$k$-matchings of subdivision graph $S (C_{n, 2r-1})$ to $S (C_{n,
2r})$. Let $v_0, v_1, \ldots, v_{2r-1}$ be the vertices on the main
path of caterpillar $C_{n, 2r-1}$ and $u_1, u_2, \ldots, u_{n - 2r}$
pendent vertices from central vertex $v_r$. We obtain graph $C_{n,
2r}$ by removing the edge $v_r u_{n-2r}$ and adding the edge
$v_{2r-1} u_{n-2r}$. Assume that vertices $w_1, w_2, \ldots, w_{n -
2r}$ are subdivision vertices of degree $2$ on edges $v_r u_1, v_r
u_2, \ldots, v_r u_{n - 2r}$. \rz

Consider an arbitrary matching $M$ of subdivision graph $S (C_{n,
2r-1})$. If $M$ does not contain the edge $v_r w_{n - 2r}$ then the
corresponding set of edges in $S (C_{n, 2r})$ is also a
$k$-matching. Now assume that matching $M$ contains the edge $v_r
w_{n - 2r}$. If we exclude this edge from the graph $S (C_{n, 2r -
1})$, we get graph $G' = S (C_{n, 2r-1}) - v_r w_{n - 2r} = P_{2r}
\cup P_{2r - 2} \cup (n-2r-1) P_2 \cup P_1$. Therefore, the number
of $k$-matchings that contain an edge $v_r w_{n - 2r}$ in $S (C_{n,
2r-1})$ is equal to the number of matchings with $k - 1$ edges in
graph $G'$ that is union of paths $P_{2r}$ and $P_{2(r - 1)}$ and $n
- 2r - 1$ disjoint edges $u_1 w_1, u_2 w_2, \ldots,$ $u_{n - 2r - 1}
w_{n - 2r - 1}$
$$
\mathcal{S'} = m_{k - 1} (G') = m_{k - 1} (P_{2r} \cup P_{2(r - 1)}
\cup (n - 2r - 1) P_2).
$$

On the other side, let $G$ be the graph $S (C_{n, 2r}) - v_{2r-1}
w_{n -2r}$. Since $G$ contains as a subgraph $P_{2 (2r - 1)} \cup (n
- 2r - 1) P_2$, the number of $k$-matchings that contain the edge
$v_{2r-1} w_{n - 2r}$ is greater than or equal to the number of
$(k-1)$-matchings in the union of path $P_{2 (2r - 1)}$ and $n - 2r -
1$ disjoint edges. Therefore,
$$
\mathcal{S} = m_{k - 1} (G) \geqslant m_{k - 1} (P_{2 (2r - 1)} \cup
(n - 2r - 1) P_2).
$$

Path $P_{2 (2r - 1)}$ is obtained by adding an edge that connects
the last vertex of $P_{2r}$ and the first vertex of $P_{2(r - 1)}$,
and thus we get inequality
$$
m_k (S (C_{n, 2r})) \geqslant m_k (S (C_{n, 2r-1})).
$$
Finally we get that all coefficients of Laplacian polynomial of $C_{n, 2r}$ are
greater than or equal to those of $C_{n, 2r - 1}$.
\end{proof}

\begin{figure}[ht]
  \center
  \includegraphics [width = 13.5cm]{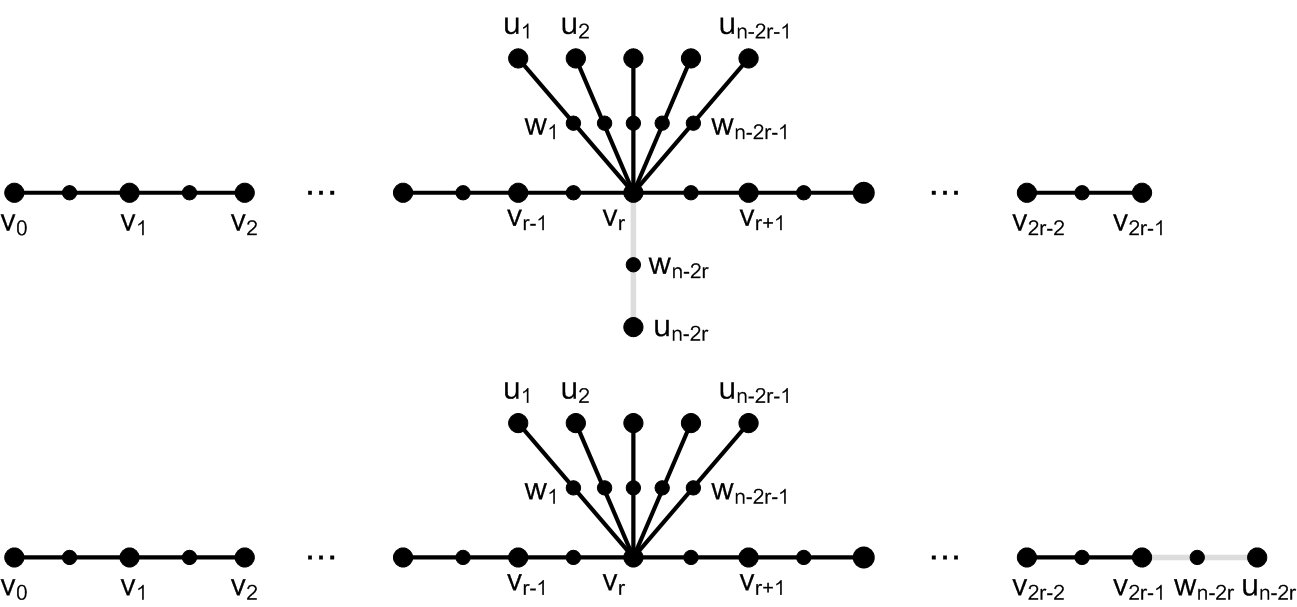}
  \caption { \textit{ Correspondence between caterpillars $C_{n, 2r - 1}$ and $C_{n, 2r}$. } }
\end{figure}

The Laplacian coefficient $c_{n - 2}$ of a tree $T$ is equal to the
sum of all distances between unordered pairs of vertices, also known
as the Wiener index,
$$
c_{n - 2} (T) = W (T) = \sum_{u, v \in V} d (u, v).
$$

The Wiener index s considered as one of the most used topological index with high correlation
with many physical and chemical indices of molecular compounds. For recent surveys
on Wiener index see \cite{DoEn01}, \cite{GuPo86}, \cite{GYLC93}.
The hyper-Wiener index $WW (G)$ \cite{Gu02} is one of the recently introduced distance based molecular
descriptors. It was proved in \cite{Gu03} that a modification of the hyper-Wiener index, denoted as $WWW (G)$,
has certain advantages over the original $WW (G)$. The modified hyper-Wiener index is
equal to the coefficient $c_{n - 3}$ of Laplacian characteristic polynomial.

\begin{pro}
The Wiener index of caterpillar $C_{n, d}$ equals:
$$
W (C_{n, d}) = \left\{
\begin{array}{l l}
\frac{d (d + 1)(d + 2)}{6} + (n - d - 1)(n - 1) + (n - d - 1) \left(\frac{d}{2} + 1 \right) \frac{d}{2}, & \quad \mbox{if $d$ is even, }\\
  \frac{d (d + 1)(d + 2)}{6} + (n - d - 1)(n - 1) + (n - d - 1)
\left(\frac{d + 1}{2} \right)^2, & \quad \mbox{if $d$ is odd. }\\
\end{array} \right.
$$
\end{pro}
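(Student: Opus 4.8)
The plan is to partition the unordered pairs of vertices of $C_{n,d}$ into three classes and sum $d(u,v)$ over each class separately. Write $v_0,v_1,\dots,v_d$ for the vertices of the main path and let $v_c$, with $c=\lfloor d/2\rfloor$, be the vertex carrying the $n-d-1$ pendent vertices. The three classes are: pairs of path vertices; pairs consisting of one path vertex and one pendent vertex; and pairs of pendent vertices.

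First I would treat the pairs of path vertices. Since pendent vertices have degree one they cannot be internal vertices of any path, so the induced subgraph on $v_0,\dots,v_d$ is $P_{d+1}$ and distances between these vertices are the same in $C_{n,d}$ as in $P_{d+1}$; this class therefore contributes $W(P_{d+1})$. Using the elementary identity $W(P_m)=\sum_{\ell=1}^{m-1}\ell(m-\ell)=\binom{m+1}{3}$ (which also follows from Lemma~2.1 together with the identity $c_{m-2}(P_m)=W(P_m)$ and $S(P_m)=P_{2m-1}$), this equals $\binom{d+2}{3}=\frac{d(d+1)(d+2)}{6}$, the first summand in both cases.

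Next, a pendent vertex $u$ satisfies $d(u,v_i)=1+|i-c|$, so $\sum_{i=0}^d d(u,v_i)=(d+1)+T_c$, where $T_c=\sum_{i=0}^d|i-c|$ is the transmission of the center in $P_{d+1}$; summing over the $n-d-1$ pendent vertices gives $(n-d-1)\bigl((d+1)+T_c\bigr)$ for the second class. Any two pendent vertices are joined by a path of length $2$ through $v_c$, so the third class contributes $2\binom{n-d-1}{2}=(n-d-1)(n-d-2)$. Adding these two contributions yields $(n-d-1)\bigl((d+1)+T_c+(n-d-2)\bigr)=(n-d-1)(n-1)+(n-d-1)T_c$, which already accounts for the middle summand $(n-d-1)(n-1)$.

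It then remains to evaluate $T_c$ for $c=\lfloor d/2\rfloor$, distinguishing parity: if $d$ is even then $c=d/2$ and $T_c=2\sum_{j=1}^{d/2}j=\bigl(\frac{d}{2}+1\bigr)\frac{d}{2}$, while if $d$ is odd then $c=\frac{d-1}{2}$ and $T_c=\sum_{j=0}^{(d-1)/2}j+\sum_{j=1}^{(d+1)/2}j=\bigl(\frac{d+1}{2}\bigr)^2$ after simplification. Substituting these into $(n-d-1)T_c$ produces exactly the last summand in each case, completing the proof. The only mildly delicate point is tracking the parity-dependent location of the center and the resulting closed form for $T_c$; the rest is routine summation, so I do not anticipate a genuine obstacle.
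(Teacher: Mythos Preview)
Your proof is correct and follows essentially the same approach as the paper: partition the pairs into path--path, path--pendent, and pendent--pendent, compute $W(P_{d+1})=\frac{d(d+1)(d+2)}{6}$ for the first class, observe that each pendent vertex contributes $(d+1)+T_c$ to the second class and that the third class gives $(n-d-1)(n-d-2)$, and then evaluate $T_c=\sum_{i=0}^d|i-c|$ according to the parity of $d$. The paper's write-up is terser---it folds the pendent--pendent contribution into a single ``per pendent vertex'' term $(n-d-2)+\sum_{i=0}^d(|i-c|+1)=(n-1)+T_c$---but the underlying decomposition and computations are identical to yours.
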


\begin{proof}
By summing all distances of vertices on the main path of length $d$,
we get
$$
\sum_{i = 1}^d i (d + 1 - i) = (d + 1) \cdot \sum_{i = 1}^d i -
\sum_{i = 1}^d i^2 = \frac{d (d + 1)(d + 2)}{6}.
$$

For every pendent vertex attached to $v_{\lfloor d/2 \rfloor} = v_c$
we have the same contribution in summation for the Wiener index:
$$
(n - d - 2) + \left ( \sum_{i = 0}^d | i - c | + 1 \right ) = (n -
1) + \sum_{i = 0}^d | i - c |.
$$

Therefore, based on parity of $d$ we easily get given formula.
\end{proof}

\section{Trees with fixed diameter}

We need the following definition of $\sigma$-transformation,
suggested by Mohar in \cite{Mo07}.

\begin{de}
Let $u_0$ be a vertex of a tree $T$ of degree $p + 1$. Suppose that
$u_0 u_1, u_0 u_2, \ldots, u_0 u_p$ are pendant edges incident with
$u_0$, and that $v_0$ is the neighbor of $u_0$ distinct from $u_1,
u_2, \ldots, u_p$. Then we form a tree $T' = \sigma (T, u_0)$ by
removing the edges $u_0 u_1, u_0 u_1, \ldots, u_0 u_p$ from $T$ and
adding $p$ new pendant edges $v_0 v_1, v_0 v_2, \ldots, v_0 v_p$
incident with $v_0$. We say that $T'$ is a $\sigma$-transform of
$T$.
\end{de}

Mohar proved that every tree can be transformed into a star by a sequence of $\sigma$-transformations.

\begin{thm} [\cite{Mo07}]
Let $T' = \sigma(T, u_0)$ be a $\sigma$-transform of a tree $T$ of
order $n$. For $d = 2, 3, \ldots k$, let $n_d$ be the number of
vertices in $T - u_0$ that are at distance $d$ from $u_0$ in $T$.
Then
$$
c_k (T) \geqslant c_k (T') + \sum_{d = 2}^k n_d \cdot p \cdot \binom{n - 2
- d}{k - d} \quad \mbox { for } 2 \leqslant k \leqslant n - 2
$$
and $c_k (T) = c_k (T')$ for $k \in \{0, 1, n - 1, n\}$.
\end{thm}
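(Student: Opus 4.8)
The plan is to argue entirely in terms of matchings, via identity~(\ref{eq-zhou}): since $c_k(T)=m_k(S(T))$ and $c_k(T')=m_k(S(T'))$, it is enough to prove, for $2\le k\le n-2$, that
$$
m_k(S(T))\ \ge\ m_k(S(T'))+\sum_{d=2}^{k}n_d\cdot p\cdot\binom{n-2-d}{k-d},
$$
the cases $k\in\{0,1,n-1,n\}$ being immediate because $T$ and $T'$ are trees on the same number of vertices, so the corresponding coefficients $c_0,c_1,c_{n-1},c_n$ coincide. Write $R$ for the component of $T-u_0$ containing $v_0$, so $|V(R)|=n-1-p$; both $S(T)$ and $S(T')$ contain the subdivision $S(R)$ as an induced subgraph and differ only in a local ``gadget'': in $S(T)$ it is the path $u_0w_0v_0$ together with $p$ pendant paths $u_0w_iu_i$, and in $S(T')$ the same path $u_0w_0v_0$ together with $p$ pendant paths $v_0w_i'v_i$.

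I would first compute $m_k(S(T))$ and $m_k(S(T'))$ by splitting their $k$-matchings according to which gadget edges are used. In each case the remaining part of the matching is a matching of $S(R)$, counted by $m_\bullet(S(R))$ when the gadget leaves $v_0$ uncovered and by $m_\bullet(S(R)-v_0)$ when the gadget already covers $v_0$, while a factor $\binom{p}{t}$ or $\binom{p-1}{t}$ records which pendant paths are used. Adding up the five resulting cases for $S(T)$ and the three for $S(T')$ and simplifying twice with Pascal's rule, the contributions of matchings of $S(R)$ that miss $v_0$ cancel completely, leaving the exact identity
$$
m_k(S(T))-m_k(S(T'))\ =\ p\sum_{t\ge0}\binom{p-1}{t}\,c_{k-1-t},\qquad c_j:=m_j(S(R))-m_j(S(R)-v_0),
$$
where $c_j$ is the number of $j$-matchings of $S(R)$ that cover $v_0$.

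The core of the proof is then the estimate
$$
c_j\ \ge\ \sum_{e\ge1}r_e\binom{\,|V(R)|-1-e\,}{\,j-e\,},
$$
$r_e$ being the number of vertices of $R$ at distance $e$ from $v_0$, which I would prove by an explicit distinctness argument. Root $R$ at $v_0$ and let $M_0$ be the matching of $S(R)$ joining every non-root vertex to the subdivision vertex of its parent edge; $M_0$ has $|V(R)|-1$ edges and misses only $v_0$. For a vertex $x$ at distance $e$ from $v_0$, the (unique) $v_0$--$x$ path of $S(R)$ contains exactly $e$ edges of $M_0$; replacing them by the $e$ complementary edges of that path gives a matching that covers $v_0$, misses only $x$, and whose $e$ new edges are disjoint from the remaining $|V(R)|-1-e$ edges of $M_0$. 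Keeping those $e$ edges and adjoining any $(j-e)$-subset of the remaining $|V(R)|-1-e$ edges produces a $j$-matching of $S(R)$ covering $v_0$; over all $x$ and all subsets this yields $\sum_{e\ge1}r_e\binom{|V(R)|-1-e}{j-e}$ matchings. The step I expect to require the most care is that these are pairwise distinct even across different targets $x$: because the only $M_0$-edge at an interior vertex of a $v_0$--$x$ path is its parent edge, that edge never lies in the ``free pool'', so from any such matching one recovers $x$ as the first vertex left uncovered along the forced alternating walk starting at $v_0$, and then the chosen subset as well.

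To finish, note that $n-3-e=(|V(R)|-1-e)+(p-1)$ and that $(1+x)^{p-1}$ has nonnegative coefficients; hence the coefficientwise inequality for $(c_j)$ upgrades, after multiplying by $(1+x)^{p-1}$ and reading off the coefficient of $x^{k-1}$, to
$$
m_k(S(T))-m_k(S(T'))\ \ge\ p\sum_{e\ge1}r_e\binom{n-3-e}{k-1-e}\ =\ \sum_{d=2}^{k}n_d\cdot p\cdot\binom{n-2-d}{k-d},
$$
where the last equality uses $n_d=r_{d-1}$ for $d\ge2$ (every vertex of $T-u_0$ at distance $d\ge2$ from $u_0$ lies in $R$ and is at distance $d-1$ from $v_0$). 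This is exactly the asserted bound. One can check that the estimate for $c_j$ is an equality when $R$ is a path or a star but may be strict otherwise, which is why the conclusion is stated as an inequality.
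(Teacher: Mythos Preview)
The paper does not contain its own proof of this theorem: it is quoted verbatim from Mohar~\cite{Mo07} and used as a black box in the proof of Theorem~\ref{thm-diameter}. So there is nothing in the present paper to compare your attempt against.

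That said, your argument is correct and complete. The exact identity
$$
m_k(S(T))-m_k(S(T'))=p\sum_{t\ge0}\binom{p-1}{t}c_{k-1-t},\qquad c_j=m_j(S(R))-m_j(S(R)-v_0),
$$
follows from your case split (I checked the bookkeeping: with $A_s,B_s$ and $A'_s,B'_s$ the numbers of gadget matchings of size $s$ that do/do not cover $v_0$ in $S(T)$ and $S(T')$, one gets $A_s-A'_s=-p\binom{p-1}{s-1}$ and $B_s-B'_s=p\binom{p-1}{s-1}$, which gives exactly your formula). Your lower bound
$$
c_j\ \ge\ \sum_{e\ge1}r_e\binom{|V(R)|-1-e}{j-e}
$$
is also sound: the near-perfect matching $M_0$ of $S(R)$ exists because $R$ is a tree, the swapped edges along the $v_0$--$x$ path are disjoint from the off-path $M_0$-edges, and the alternating-walk recovery of $x$ is forced because every subdivision vertex has degree~$2$ and each on-path original vertex $y_i$ with $i<e$ is incident in $M$ only with the new edge $y_iz_{i+1}$. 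The Vandermonde step and the reindexing $n_d=r_{d-1}$ for $d\ge2$ finish the job. This is precisely the matching-in-subdivision approach that Mohar's original proof takes, so your proposal is in the same spirit as the source even though the present paper omits the argument.
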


\begin{thm}
\label{thm-diameter} Among connected acyclic graphs on $n$ vertices
and diameter $d$, caterpillar
$$
C_{n, d} = C (0, \ldots, 0, a_{\lfloor d / 2 \rfloor}, 0, \ldots,
0),
$$
where $a_{\lfloor d / 2 \rfloor} = n - d - 1$, has minimal Laplacian
coefficient $c_k$, for every $k = 0, 1, \ldots, n$.
\end{thm}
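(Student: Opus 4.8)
The strategy is to transform an arbitrary $n$-vertex tree $T$ of diameter $d$ into $C_{n,d}$ in two stages, using at each step only operations that keep the diameter equal to $d$ and, via the Zhou--Gutman identity~(\ref{eq-zhou}), do not increase any Laplacian coefficient $c_k$. Since $c_0,c_1,c_{n-1},c_n$ depend only on $n$, it suffices to control $c_k$ for $2\le k\le n-2$. Fix a diametral path $P=v_0v_1\cdots v_d$ in $T$ and let $v_c$, $c=\lfloor d/2\rfloor$, be a centre of $P$.

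\emph{Stage A: reduction to a caterpillar.} As long as $T$ is not a caterpillar with spine $P$, some vertex lies at distance at least $2$ from $P$; I would then pick a non-leaf vertex $u_0\notin P$ of maximum distance from $P$. Every neighbour of $u_0$ other than its neighbour $w$ on the $u_0$--$P$ path is a leaf, so $u_0$ is eligible for Mohar's $\sigma$-transformation, and we replace $T$ by $\sigma(T,u_0)$. The removed edges are pendant at $u_0\notin P$, so none lies on $P$; hence $P$ survives and the diameter stays at least $d$. On the other hand every moved leaf, and $u_0$ itself, ends up at distance at most $\operatorname{dist}(u_0,P)$ from $P$, and since $\operatorname{dist}(w,P)=\operatorname{dist}(u_0,P)-1$ and $T$ already had diameter $d$, a short distance computation shows that no two vertices become farther apart than $d$; so the diameter remains $d$. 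By Mohar's theorem together with~(\ref{eq-zhou}) every $c_k$ is non-increasing, while the non-negative integer $\sum_{x\in V}\operatorname{dist}(x,P)$ strictly decreases, so after finitely many steps $T$ becomes a caterpillar with spine $P$. A pendant edge at $v_0$ or $v_d$ would produce a path of length $d+1$, so this caterpillar has the form $C(a_1,\dots,a_{d-1})$ with $\sum_{i=1}^{d-1}a_i=n-d-1$.

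\emph{Stage B: optimizing the caterpillar.} It remains to prove that among caterpillars $C(a_1,\dots,a_{d-1})$ with $\sum a_i=n-d-1$, the caterpillar $C_{n,d}$, which has all $n-d-1$ pendant vertices at $v_c$, minimizes every $c_k$. Here the $\sigma$-transformation is unavailable, since a spine vertex has two non-leaf neighbours, so I would argue directly with matchings of subdivision graphs as in the proof of Lemma~\ref{le-radius}: whenever $a_j>0$ for some $j\ne c$, transfer one pendant vertex of $v_j$ one step along the spine toward $v_c$; the spine is untouched and a short check keeps the diameter at $d$. Writing $c_k=m_k(S(\cdot))$ and applying the pendant-edge recurrence to the subdivided pendant path that is moved, the change in $c_k$ reduces to a difference of the form $m_{k-1}(B-v_j)-m_{k-1}(B-v_{j\pm1})$, where $B$ is the common subdivision graph; deleting $v_j$ versus the vertex one step closer to $v_c$ splits the subdivided spine into two paths, the second split being strictly more balanced, so after recombining these paths the required inequality follows from Lemma~\ref{le-two paths} (together with the count $m_k(P_n)=\binom{n-k}{k}$). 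Alternatively one can exhibit a direct injection from the $k$-matchings of $S(C_{n,d})$ into those of $S(C(a_1,\dots,a_{d-1}))$, as in the proof of Lemma~\ref{le-radius}. Iterating these moves reaches $C_{n,d}$, and combining the two stages gives $c_k(T)\ge c_k(C_{n,d})$ for every $k$ and every $n$-vertex tree $T$ of diameter $d$.

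The main obstacle is Stage~B. Stage~A is essentially bookkeeping: one only has to check that pushing pendant leaves toward $P$ cannot create a path longer than $d$, which is forced because $T$ already had diameter $d$ and $w$ lies one step closer than $u_0$ to each end of $P$. In Stage~B, by contrast, the loss of the $\sigma$-transformation means one must set up by hand the right partition of the $k$-matchings of the subdivision graph so that Lemma~\ref{le-two paths} applies; in particular the pendant paths that detach when a spine vertex is deleted, and the exact direction of the inequality, must be tracked so that all side terms line up, and every pendant-shift must be chosen so that the diameter stays exactly $d$ (moving strictly toward $v_c$ and never past it). Once these points are settled, Stages~A and~B together establish the theorem.
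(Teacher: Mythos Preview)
Your two–stage outline is exactly the paper's approach: first use Mohar's $\sigma$-transformations to reduce an arbitrary diameter-$d$ tree to a caterpillar with spine $P$, then compare caterpillars via matchings of subdivision graphs and Lemma~\ref{le-two paths}. Your Stage~A is, if anything, more careful than the paper's about diameter preservation.

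There is one genuine soft spot in your Stage~B. In the ``one pendant at a time'' variant you reduce the comparison to $m_{k-1}(B-v_j)-m_{k-1}(B-v_{j\pm1})$, where $B$ is the subdivision of the caterpillar with that one pendant removed. But $B$ still carries all the \emph{other} pendants, so $B-v_j$ and $B-v_{j\pm1}$ are not unions of two bare paths: the spine fragments have pendant $P_3$'s hanging at various $v_s$, and the sets of detached $P_2$'s (from pendants that were at $v_j$, respectively $v_{j\pm1}$) need not even have the same size. Lemma~\ref{le-two paths} therefore does not apply directly, and ``recombining these paths'' is not a well-defined step here.

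The fix is precisely your stated alternative, and it is what the paper does: compare $S(C_{n,d})$ directly with $S(C(a_1,\dots,a_{d-1}))$ in one shot rather than step by step. The point is that in $S(C_{n,d})$ \emph{all} pendants sit at $v_c$, so once a matching uses $v_cw_i$ the remaining pendants detach into independent $P_2$'s and the spine really does split into the two bare paths $P_{2\lfloor d/2\rfloor}\cup P_{2\lceil d/2\rceil}$. Conditioning on which (and how many) of the detached edges $w_su_s$ are used, the remaining spine choice is then compared, via Lemma~\ref{le-two paths}, with the less balanced split $P_{2j}\cup P_{2(d-j)}$ obtained in $S(C(a_1,\dots,a_{d-1}))$ when the corresponding edge $v_jw_i$ is used; this yields an injection of $k$-matchings and hence $c_k(C_{n,d})\le c_k(C(a_1,\dots,a_{d-1}))$. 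With this correction your proof is complete and coincides with the paper's.
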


\begin{proof}
Coefficients $c_0$, $c_1$, $c_{n - 1}$ and $c_n$ are constant for
all trees on $n$ vertices. The star graph $S_n$ is the unique tree
with diameter $2$ and path $P_n$ is unique graph with diameter $n -
1$. Therefore, we can assume that $2 \leqslant k \leqslant n - 2$
and $3 \leqslant d \leqslant n - 2$. \rz

Let $P = v_0 v_1 v_2 \ldots v_d$ be a path in tree $T$ of maximal
length. Every vertex $v_i$ on the path $P$ is a root of a tree $T_i$
with $a_i + 1$ vertices, that does not contain other vertices of
$P$. We apply $\sigma$-transformation on trees $T_1, T_2, \ldots,
T_{d - 1}$ to decrease coefficients $c_k$, as long as we do not get
a caterpillar $C (a_0, a_1, a_2, \ldots, a_d)$. By a theorem of Zhou
and Gutman, it suffices to see that
$$
m_k (S (C (a_1, a_2, \ldots, a_{d - 1}))) > m_k (S (C_{n, d})).
$$

Assume that $v_{\lfloor d / 2 \rfloor} = v_c$ is a central vertex of
$C_{n, d}$. Let $u_1, u_2, \ldots, u_{n - d - 1}$ be pendent
vertices attached to $v_c$ in $S (C_{n, d})$, and let $w_1, w_2,
\ldots, w_{n - d - 1}$ be subdivision vertices on pendent edges $v_c
u_1, v_c u_2, \ldots, v_c u_{n - d - 1}$. We also introduce ordering
of pendent vertices. Namely, in the graph $C_n (a_1, a_2, \ldots,
a_{d - 1})$ first $a_1$ vertices in the set $\{ u_1, u_2, \ldots, u_{n - d
- 1} \}$ are attached to $v_1$, next $a_2$ vertices are attached to
$v_2$, and so on. \rz

Consider an arbitrary matching $M'$ with $k$ edges in caterpillar $S
(C_{n, d})$. If $M$ does not contain any of the edges $ \{v_c w_1,
v_c w_2, \ldots, v_c w_{n - d - 1} \}$, then we can a construct
matching in $S (C (a_1, a_2, \ldots, a_{d - 1}))$, by taking
corresponding edges from $M$. If the edge $v_c w_i$ is in the
matching $M'$ for some $1 \leqslant i \leqslant n - d - 1$, the
corresponding edge $v_j w_i$ is attached to some vertex $v_j$, where
$1 \leqslant j \leqslant d - 1$. Moreover, if we fix the number $l$
of matching edges in the set
$$
\{ u_1 w_1, u_2 w_2, \ldots, u_{i - 1} w_{i - 1}, u_{i + 1} w_{i +
1}, \ldots, u_{n - d - 1} w_{n - d - 1} \},
$$
we have to choose exactly $k - l - 1$ independent edges in the
remaining graphs. Caterpillar $S (C_{n, d})$ is decomposed into two
path of lengths $2 \lfloor \frac{d}{2} \rfloor$ and $2 \lceil
\frac{d}{2} \rceil$, and caterpillar $S (C (a_1, a_2, \ldots,
a_{d - 1}))$ is decomposed in paths of lengths $2j$ and $2d - 2j$. From
Lemma \ref{le-two paths} we can see that
$$
m_{k - l - 1} (2 \left \lfloor \frac{d}{2} \right \rfloor, 2 \left
\lceil \frac{d}{2} \right \rceil) \leqslant m_{k - l - 1} (2j, 2d -
2j).
$$

If we sum this inequality for $l = 0, 1, \ldots, k - 1$, we obtain
that the number of $k$-matchings in graph $S (C_{n, d})$ is less
than the number of $k$-matchings in $S (C (a_0, a_1, a_2, \ldots,
a_d))$. Thus, for every tree $T$ on $n$ vertices with diameter $d$
holds:
$$
c_k (C_{n, d}) \leqslant c_k (T), \quad k = 0, 1, 2, \ldots n.
$$
\end{proof}

\begin{figure}[ht]
  \center
  \includegraphics [width = 10.5cm]{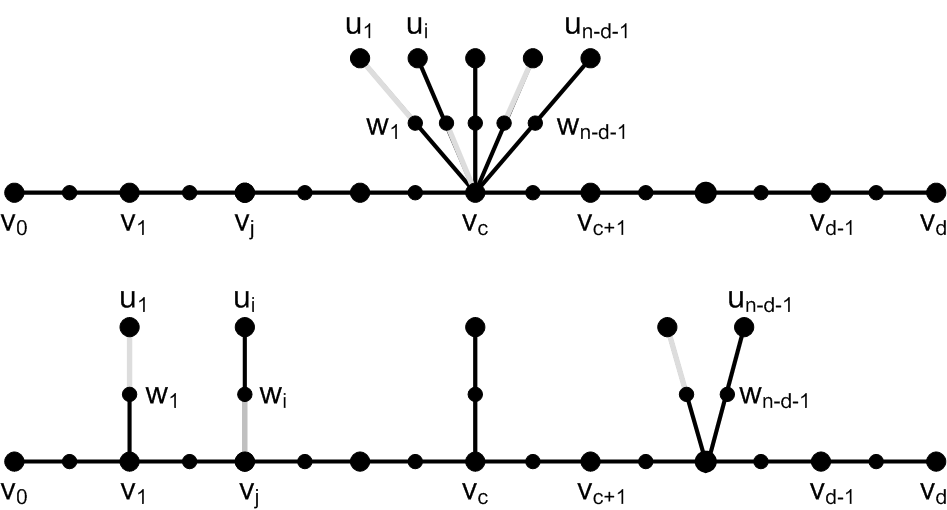}
  \caption { \textit{ Correspodence between caterpillars $C_{n, d}$ and $C (a_0, a_1, \ldots, a_d)$.  } }
\end{figure}

\section{Graphs with fixed radius}

\begin{thm}
Among connected graphs on $n$ vertices and radius $r$, caterpillar
$C_{n, 2r-1}$ has minimal coefficient $c_k$, for every $k = 0, 1,
\ldots, n$.
\end{thm}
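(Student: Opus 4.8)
The overall strategy is to reduce the radius problem to the diameter problem already solved in Theorem~\ref{thm-diameter}, together with the comparison of caterpillars $C_{n,2r}$ and $C_{n,2r-1}$ established in Lemma~\ref{le-radius}. First I would dispense with the trivial coefficients: $c_0$, $c_1$, $c_{n-1}$ and $c_n$ depend only on $n$, so it suffices to treat $2 \leqslant k \leqslant n-2$. I would also note the admissible range $2 \leqslant r \leqslant \lfloor n/2 \rfloor$, since a connected $n$-vertex graph of radius $r$ exists only in this range.

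Let $G$ be a connected graph on $n$ vertices with radius $r$. The key observation is that a spanning tree $T$ of $G$ satisfies $c_k(T) \leqslant c_k(G)$ for all $k$; this follows because the Laplacian coefficients are monotone under edge addition (each $c_k$ is a sum of products of Laplacian eigenvalues, or alternatively one may invoke the Matrix-Tree–type weighted forest interpretation of $c_k$ — removing an edge only removes spanning forests from the count). So I would first replace $G$ by a spanning tree $T$, noting that removing edges cannot decrease the radius, hence $r(T) \geqslant r(G) = r$. Now $T$ is a tree, so by the relation~\eqref{tree_center} its diameter is either $2r(T)$ or $2r(T)-1$, and in either case $d(T) \geqslant 2r(T)-1 \geqslant 2r-1$. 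Write $d = d(T)$.

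By Theorem~\ref{thm-diameter}, $c_k(T) \geqslant c_k(C_{n,d})$ for every $k$. It therefore remains to show that $c_k(C_{n,d}) \geqslant c_k(C_{n,2r-1})$ whenever $d \geqslant 2r-1$. For this I would use Lemma~\ref{le-radius} repeatedly: that lemma gives $c_k(C_{n,2s}) \geqslant c_k(C_{n,2s-1})$; combined with Theorem~\ref{thm-diameter}'s monotonicity-type comparison one gets a chain $c_k(C_{n,d}) \geqslant c_k(C_{n,d-1}) \geqslant \cdots \geqslant c_k(C_{n,2r-1})$. More directly: it is enough to prove $c_k(C_{n,d}) \geqslant c_k(C_{n,d-1})$ for all $d$, which combines the odd-to-even step (apply Theorem~\ref{thm-diameter} to $C_{n,d}$ viewed as an $n$-vertex tree of diameter $d$, whose extremal comparison tree has fewer... ) — in fact the cleanest route is to run the same subdivision-graph and two-paths argument as in Lemma~\ref{le-radius} and Theorem~\ref{thm-diameter}: one shows that moving the pendant star from the center of $C_{n,d}$ one step toward an end, which lengthens the main path, only increases every $m_k(S(\cdot))$, via Lemma~\ref{le-two paths} applied to the two paths into which the subdivided caterpillar decomposes at its branch vertex. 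Putting the two comparisons together yields $c_k(G) \geqslant c_k(T) \geqslant c_k(C_{n,d}) \geqslant c_k(C_{n,2r-1})$, and since $C_{n,2r-1}$ itself is a tree of diameter $2r-1$, hence (being bicentral) of radius $r$, it is admissible, so the bound is attained.

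The main obstacle is the middle inequality $c_k(C_{n,d}) \geqslant c_k(C_{n,2r-1})$ for $d > 2r-1$: one must be careful that the "shift the pendant edges toward the path-end" operation both keeps the vertex count fixed and strictly lengthens the main diameter path, and that the resulting decomposition into two even-length paths has the correct lengths for Lemma~\ref{le-two paths} to apply — the parity bookkeeping (whether $d$ is even or odd, and how the center vertex $v_{\lfloor d/2\rfloor}$ sits) is where the argument could slip. Everything else is a routine combination of results already in hand.
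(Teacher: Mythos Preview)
Your reduction to a spanning tree and then to Theorem~\ref{thm-diameter} is the right idea, but you create work for yourself by passing to an \emph{arbitrary} spanning tree. As you note, this tree may have radius strictly larger than $r$, hence diameter $d$ possibly much larger than $2r$, and you are then forced to prove the full monotonicity chain $c_k(C_{n,d}) \geqslant c_k(C_{n,d-1}) \geqslant \cdots \geqslant c_k(C_{n,2r-1})$. Lemma~\ref{le-radius} only supplies the even-to-odd steps $c_k(C_{n,2s}) \geqslant c_k(C_{n,2s-1})$; the odd-to-even steps $c_k(C_{n,2s-1}) \geqslant c_k(C_{n,2s-2})$ are not covered and are exactly the ``parity bookkeeping'' you flag as the place the argument could slip. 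So as written this is a genuine gap: the obstacle you identify is real, and you have not actually closed it.

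The paper sidesteps this entirely by choosing the spanning tree more carefully. Pick a center vertex $v$ of $G$ (so $\varepsilon_G(v)=r$) and take $T$ to be the breadth-first-search tree rooted at $v$. Since BFS preserves all distances from the root, $\varepsilon_T(v)=\varepsilon_G(v)=r$, and hence $r(T)\leqslant r$; combined with your observation that removing edges cannot decrease the radius, this forces $r(T)=r$ exactly. Now~\eqref{tree_center} gives $d(T)\in\{2r-1,2r\}$, so after applying Theorem~\ref{thm-diameter} you need only the single comparison $c_k(C_{n,2r}) \geqslant c_k(C_{n,2r-1})$, which is precisely Lemma~\ref{le-radius}. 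No chain, no extra parity cases. (A minor side remark: for general connected $G$ the coefficients $c_1=2|E|$ and $c_{n-1}=n\tau(G)$ are not determined by $n$ alone, so your dismissal of them is slightly off --- but the desired inequalities $c_1(G)\geqslant 2(n-1)$ and $c_{n-1}(G)\geqslant n$ still hold trivially, so no harm is done.)
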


\begin{proof}
Let $v$ be a center vertex of $G$ and let $T$ be a spanning tree of $G$ with shortest paths
from $v$ to all other vertices.
Tree $T$ has radius $r$ and can be obtained by performing the breadth first search algorithm (see \cite{CoLRS01}).
Laplacian eigenvalues of an edge-deleted graph $G - e$ interlace those of $G$,
$$
\mu_1 (G) \geqslant \mu_1 (G - e) \geqslant \mu_2 (G) \geqslant
\mu_2 (G - e) \geqslant \ldots \geqslant \mu_{n - 1} (G) \geqslant
\mu_{n - 1} (G - e) \geqslant 0.
$$

Since, $c_k (G)$ is equal to $k$-th symmetric polynomial of eigenvalues
$(\mu_1 (G), \mu_2 (G), \ldots, \mu_{n - 1} (G))$, we have $c_k (G) \geqslant c_k (G -
e)$. Thus, we delete edges of $G$ until we get a tree $T$ with radius
$r$. This way we do not increase Laplacian coefficients $c_k$. The diameter of tree $T$
is either $2r-1$ or $2r$. Since $c_k (C_{n, 2r-1}) \leqslant c_k (C_{n, 2r})$ from Lemma \ref{le-radius}
we conclude that extremal graph on $n$ vertices, which has minimal
coefficients $c_k$ for fixed radius $r$, is the caterpillar $C_{n,
2r-1}$.
\end{proof}

We can establish analogous result on the Wiener index.

\begin{cor}
Among connected graphs on $n$ vertices and radius $r$, caterpillar
$C_{n, 2r-1}$ has minimal Wiener index.
\end{cor}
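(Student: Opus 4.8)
The plan is to deduce the corollary directly from the theorem just proved, using the fact noted in Section~2 that for a tree the Laplacian coefficient $c_{n-2}$ coincides with the Wiener index. First I would invoke the theorem: among connected $n$-vertex graphs of radius $r$, the caterpillar $C_{n,2r-1}$ simultaneously minimizes every Laplacian coefficient $c_k$, in particular $c_{n-2}$. The only subtlety is that the identity $c_{n-2}(G)=W(G)$ holds for trees, not for arbitrary connected graphs, so I cannot simply read off $W(G)=c_{n-2}(G)\ge c_{n-2}(C_{n,2r-1})=W(C_{n,2r-1})$ for a general $G$; I must first pass to a spanning tree.

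So the argument runs: let $G$ be any connected $n$-vertex graph with $r(G)=r$, and let $v$ be a center vertex. As in the proof of the theorem, take a BFS spanning tree $T$ of $G$ rooted at $v$; then $d_T(v,u)=d_G(v,u)$ for every vertex $u$, so $\varepsilon_T(v)=\varepsilon_G(v)=r$, hence $r(T)\le r$. Since $T$ is a spanning subgraph of $G$, distances cannot decrease, so $r(T)\ge r(G)=r$; thus $r(T)=r$. Deleting edges never decreases any distance, so $W(G)\le W(T)$. Now $T$ is a tree of radius $r$, so by \eqref{tree_center} its diameter is $2r-1$ or $2r$, and by Theorem~\ref{thm-diameter} together with $c_{n-2}=W$ for trees and Lemma~\ref{le-radius} we get $W(T)=c_{n-2}(T)\ge c_{n-2}(C_{n,2r-1})=W(C_{n,2r-1})$. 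Chaining these gives $W(G)\ge W(C_{n,2r-1})$, which is the claim; and $C_{n,2r-1}$ itself has radius $r$, so the bound is attained.

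There is essentially no hard step here — the corollary is a soft consequence of the theorem. The one point requiring a little care is precisely the one above: one must not apply $c_{n-2}=W$ to the non-tree $G$ directly, but instead first replace $G$ by a spanning tree of the same radius (which is exactly the device already used in the theorem's proof), and only then use the tree identity. Once that is observed the rest is immediate.
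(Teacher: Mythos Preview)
Your argument contains a fatal direction error. You correctly observe that passing from $G$ to a spanning tree $T$ can only increase distances, so $W(G)\le W(T)$; and you correctly deduce $W(T)\ge W(C_{n,2r-1})$ from Theorem~\ref{thm-diameter} and Lemma~\ref{le-radius}. But these two inequalities do \emph{not} chain: from $W(G)\le W(T)$ and $W(T)\ge W(C_{n,2r-1})$ nothing whatsoever follows about the relation between $W(G)$ and $W(C_{n,2r-1})$. The spanning-tree device works in the theorem because Laplacian interlacing gives $c_k(G)\ge c_k(T)$, which points the \emph{same} way as the tree inequality $c_k(T)\ge c_k(C_{n,2r-1})$; for the distance-based Wiener index the passage to a spanning tree goes the \emph{opposite} way, and the argument collapses.

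This is not a repairable slip. Read literally with $W(G)=\sum_{\{u,v\}}d(u,v)$, the statement is false for general graphs: take $n=6$, $r=2$. The graph obtained from the $5$-cycle by attaching one pendant vertex has radius $2$ and Wiener index $26$, whereas $W(C_{6,3})=28$ (by Proposition~2.4 or direct count). So no argument can establish the corollary in that generality. The intended reading is presumably either the restriction to trees, or ``Wiener index'' as shorthand for the Laplacian coefficient $c_{n-2}$ (which for non-trees is \emph{not} the sum of distances); under either reading the corollary is the immediate specialization $k=n-2$ of the preceding theorem, and the extra spanning-tree step you introduce is both unnecessary and, as shown, points the wrong way.
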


\section{Concluding remarks}

We proved that $C_{n, 2r - 1}$ is the unique graph that minimize all
Laplacian coefficients simultaneously among graphs on $n$ vertices
with given radius $r$. In the class of $n$-vertex graphs with fixed
diameter, we found the graph with minimal Laplacian coefficients in
case of trees---because it is not always possible to find a spanning
tree of a graph with the same diameter. \rz

Naturally, one wants to describe $n$-vertex graphs with fixed radius
or diameter with maximal Laplacian coefficients. We have checked all trees up to $20$ vertices and
classify them based on diameter and radius. For every triple $(n, d, k)$
and $(n, r, k)$ we found extremal graphs with $n$ vertices and fixed
diameter $d$ or fixed radius $r$ that maximize coefficient $c_k$.
The result is obvious---trees that maximize Wiener index are different
from those with the same parameters that maximize modified hyper-Wiener index. \rz

The following two graphs on the Figure 4 are extremal for $n = 18$
vertices with diameter $d = 4$; the first graph is a unique tree
that maximizes Wiener index $c_{n - 2} = 454$ and the second one is
also a unique tree that maximizes modified hyper-Wiener index $c_{n
- 3} = 4960$. \rz

\begin{figure}[ht]
  \label{fig-dia}
  \center
  \includegraphics [width = 4cm]{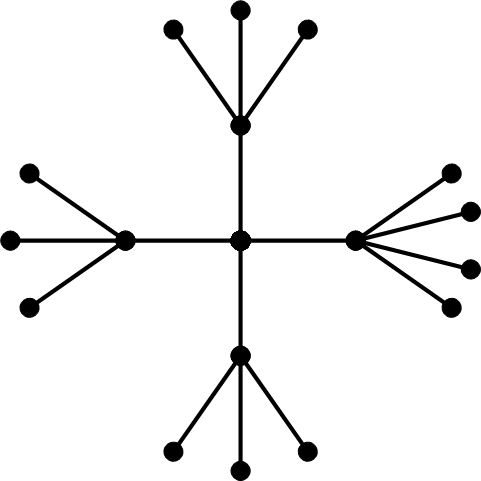}
  \hspace{2cm}
  \includegraphics [width = 4cm]{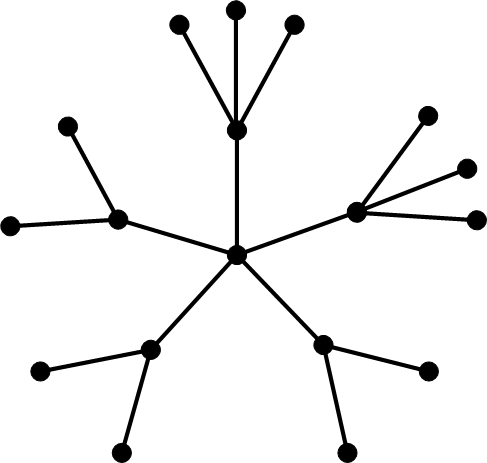}
  \caption { \textit{ Graphs with $n = 18$ and $d = 4$ that maximize $c_{16}$ and $c_{15}$. } }
\end{figure}

The following two graphs on the Figure 5 are extremal for $n = 17$
vertices with radius $r = 5$; the first graph is a unique tree that
maximizes Wiener index $c_{n - 2} = 664$ and the second one is also
a unique tree that maximizes modified hyper-Wiener index $c_{n - 3}
= 9173$.

\begin{figure}[ht]
  \label{fig-rad}
  \center
  \includegraphics [width = 8.5cm]{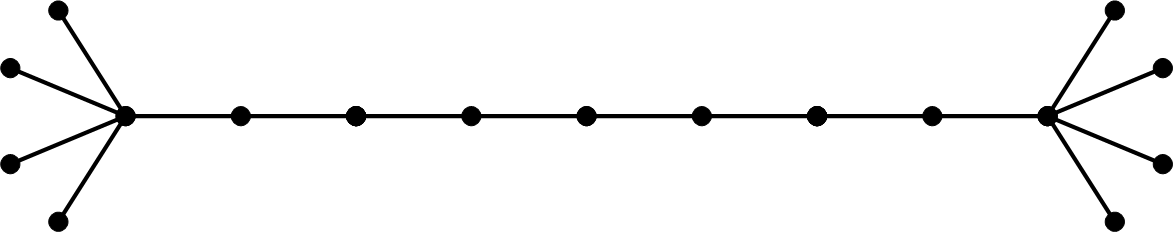}
    \hspace{1cm}
  \includegraphics [width = 7cm]{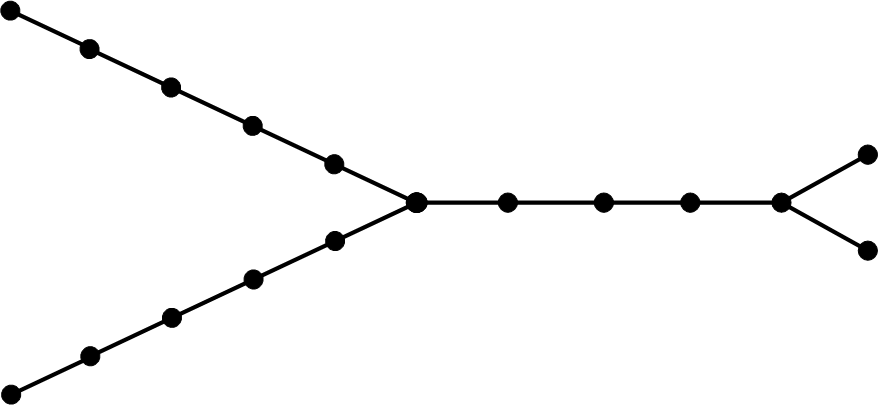}
  \caption { \textit{ Graphs with $n = 17$ and $r = 5$ that maximize $c_{15}$ and $c_{14}$. } }
\end{figure}


\begin{thebibliography}{99}

\bibitem{BeWa07}
    S. Bereg, H. Wang, \textit{Wiener indices of balanced binary trees}, Discr. Appl. Math.
    155 (2007), 457--467.

\bibitem{CvDS95}
    D. Cvetkovi\' c, M. Doob, H. Sachs, \textit{Spectra of graphs - Theory and Application}, 3rd edition,
    Johann Ambrosius Barth Verlag, 1995.

\bibitem{CoLRS01}
    T. H. Cormen, C. E. Leiserson, R. L. Rivest, C. Stein,
    \textit{Introduction to Algorithms}, Second Edition, MIT Press, Cambridge, MA, 2001.

\bibitem{DoEn01}
    A. Dobrynin, R. Entringer, I. Gutman, \textit{Wiener index of
    trees: theory and applications}, Acta Appl. Math. 66 (2001), 211–-249.

\bibitem{GuPo86}
    I. Gutman, O. E. Polansky, \textit{Mathematical concepts in organic chemistry}, Springer-Verlag,
    Berlin 1986.

\bibitem{GYLC93}
    I. Gutman, Y. N. Yeh, S. L. Lee, J. C. Chen, \textit{Some recent results in the theory of the Wiener number},
    Indian J. Chem. 32A (1993), 651--661.

\bibitem{Gu02}
    I. Gutman, \textit{Relation between hyper-Wiener and Wiener index}, Chem. Phys. Lett.
    364 (2002), 352--356.

\bibitem{Gu03}
    I. Gutman, \textit{Hyper-Wiener index and Laplacian spectrum}, J. Serb. Chem. Soc. 68 (2003),
    949--952.

\bibitem{GuPa03}
    I. Gutman, L. Pavlovi\' c, \textit{On the coefficients of the Laplacian characteristic
    polynomial of trees}, Bull. Acad. Serbe Sci. Arts 127 (2003) 31--40.

\bibitem{LiPa08} H.~Liu, X.F.~Pan,
    \textit{On the Wiener index of trees with fixed diameter}, MATCH
    Commun. Math. Comput. Chem. 60 (2008), 85--94.

\bibitem{Mo07}
    B. Mohar, \textit{On the Laplacian coefficients of acyclic graphs},
    Linear Algebra Appl. 722 (2007), 736--741.

\bibitem{SiMaBe08}
    S. K. Simi\' c, E. M. L. Marzi, F. Belardo,
    \textit{On the index of caterpillars}, Discrete Math. 308 (2008), 324--330.

\bibitem{St08}
    D. Stevanovi\' c, \textit{Laplacian-like energy of trees},
    MATCH Commun. Math. Comput. Chem. 61 (2009), 407--417.

\bibitem{YaYe06}
    W. Yan, Y. N. Yeh, \textit{Connections between Wiener index and
    matchings}, J. Math. Chem. 39 (2006), 389--399.

\bibitem{WaGu08}
    S. Wang, X. Guo, \textit{Trees with extremal Wiener indices},
    MATCH Commun. Math. Comput. Chem. 60 (2008), 609--622.

\bibitem{ZhLi99}
    F. Zhang, H. Li,
    \textit{On acyclic conjugated molecules with minimal energies}, Discr. Appl. Math. 92 (1999), 71--84.

\bibitem{ZhGu08} B. Zhou, I. Gutman, \textit{A connection between ordinary and Laplacian spectra of
    bipartite graphs}, Linear Multilin. Algebra 56 (2008), 305--310.


\end{thebibliography}
\end{document}